\newtheorem{theorem}{Theorem}[section]
\newtheorem{lemma}[theorem]{Lemma}
\newtheorem{proposition}[theorem]{Proposition}
\newtheorem{definition}[theorem]{Definition}
\theoremstyle{remark}
\newtheorem{remark}[theorem]{Remark}
\numberwithin{equation}{section}
\begin{document}

\title[A remark on the extreme value theory for continued fractions]{A remark on the extreme value theory for continued fractions}

%    Information for first author
\author {Lulu Fang}
\address{School of Mathematics, South China University of Technology, Guangzhou 510640, P.R. China}
\email{f.lulu@mail.scut.edu.cn}

\author {Kunkun Song}
\address{School of Mathematics and Statistics, Wuhan University, Wuhan, 430072, P.R. China}
\email{songkunkun@whu.edu.cn}

%    Information for second author
%\author{Author Two}
%\address{Mathematical Research Section, School of Mathematical Sciences,
%Australian National University, Canberra ACT 2601, Australia}
%\email{two@maths.univ.edu.au}
%\thanks{Support information for the second author.}

%    General info
%\thanks {Corresponding author}
\subjclass[2010]{Primary 11K50, 28A80; Secondary 60G70}
\keywords{Continued fractions, Extreme value theory, Doubly exponential rate, Hausdorff dimension}

\begin{abstract}
Let $x$ be a irrational number in the unit interval and denote by its continued fraction expansion $[a_1(x), a_2(x), \cdots, a_n(x), \cdots]$. For any $n \geq 1$, write $T_n(x) = \max_{1 \leq k \leq n}\{a_k(x)\}$. We are interested in the Hausdorff dimension of the fractal set
\[
E_\phi = \left\{x \in (0,1): \lim_{n \to \infty} \frac{T_n(x)}{\phi(n)} =1\right\},
\]
where $\phi$ is a positive function defined on $\mathbb{N}$ with $\phi(n) \to \infty$ as $n \to \infty$. Some partial results have been obtained by Wu and Xu, Liao and Rams, and Ma. In the present paper, we further study this topic when $\phi(n)$ tends to infinity with a doubly exponential rate as $n$ goes to infinity.

\end{abstract}

\maketitle

\section{Introduction}
Every irrational number $x$ in the unit interval has a unique \emph{continued fraction expansion} of the form
\begin{equation}\label{CF}
x =  \dfrac{1}{a_1(x) +\dfrac{1}{a_2(x) + \ddots +\dfrac{1}{a_n(x)+ \ddots}}},
\end{equation}
where $a_n(x)$ are positive integers and are called the \emph{partial quotients} of the continued fraction expansion of $x$~$(n \in \mathbb{N})$. Sometimes we write the representation (\ref{CF}) as $[a_1(x), a_2(x), \cdots, a_n(x), \cdots]$.
For more details about continued fractions, we refer the reader to a monograph of Khintchine \cite{lesKhi64}.

Let $x \in (0,1)$ be an irrational number. For any $n \geq 1$, we define
\[
T_n(x) = \max\{a_k(x):1 \leq k \leq n\},
\]
i.e., the largest one in the block of the first $n$ partial quotients of the continued fraction expansion of $x$. Extreme value theory in probability theory is concerned with the limit distribution laws for the maximum of a sequence of random variables (see \cite{lesLLR83}). Galambos \cite{lesGal72} first considered the extreme value theory for continued fractions and obtained that
\[
\lim_{n \to \infty} \mu\left\{x \in (0,1): \frac{T_n(x)}{n}< \frac{y}{\log 2} \right\} = e^{-1/y}
\]
for any $y >0$, where $\mu$ is equivalent to the Lebesgue measure, namely \emph{Gauss measure} given by
\[
\mu(A) = \frac{1}{\log 2}\int_A \frac{1}{1+x}dx
\]
for any Borel set $A \subseteq (0, 1)$. Later, he also gave an iterated logarithm type theorem for $T_n(x)$ in \cite{lesGal73}, that is, for $\mu$-almost all $x \in (0,1)$,
\[
\limsup_{n \to \infty} \frac{\log T_n(x) -\log n}{\log \log n} =1\ \ \ \ \text{and}\ \ \ \
\liminf_{n \to \infty} \frac{\log T_n(x) -\log n}{\log \log n} =0.
\]
As a consequence, we know that
\[
\lim_{n \to \infty} \frac{\log T_n(x)}{\log n} =1
\]
holds for $\mu$-almost all $x \in (0,1)$.
Furthermore, Philipp \cite{lesPhi75} solved a conjecture of Erd\H{o}s for $T_n(x)$ and obtained its order of magnitude. In fact,
\[
\liminf_{n \to \infty} \frac{T_n(x) \log \log n}{n} = \frac{1}{\log 2}.
\]
holds for $\mu$-almost all $x \in (0,1)$. Later, Okano \cite{lesOka02} constructed some explicit real numbers that satisfied this liminf.
However, a natural question arises: what is the exact growth rate of $T_n$ or
whether there exists a normalizing sequence $\{b_n\}_{n\geq1}$ such that $T_n(x)/b_n$ converges to a positive and finite constant for $\mu$-almost all $x \in (0,1)$. Unfortunately, there is a negative answer for this question. That is to say, there is no such a normalizing non-decreasing sequence $\{b_n\}_{n\geq1}$ so that $T_n(x)/b_n$ converges to a positive and finite constant for $\mu$-almost all $x \in (0,1)$. More precisely,
\[
\limsup_{n \to \infty} \frac{T_n(x)}{b_n} = 0\ \ \ \ \ \ \ \ \text{or}\ \ \ \ \ \ \ \ \limsup_{n \to \infty} \frac{T_n(x)}{b_n} = +\infty
\]
holds for $\mu$-almost all $x \in (0,1)$ according to $\sum_{n \geq 1} 1/b_n$ converges or diverges.
In 1935, Khintchine \cite{lesKhi35} proved that $S_n(x)/(n\log n)$ converges in measure to the constant $1/(\log 2)$ and Philipp \cite{lesPhi88} remarked that this result cannot hold for $\mu$-almost all $x \in (0,1)$, where $S_n(x) = a_1(x)+ \cdots + a_n(x)$. That is to say, the strong law of large numbers for $S_n$ fails.
%In 1988, the sequence $\{a_n(x)\}_{n \geq 1}$ cannot satisfy a strong law of large numbers for any reasonably growing
%norming sequence, i.e., $S_N(x)/\phi(N)$ will never converge for any $\phi(N)$ to a positive constant for $\mu$-almost all $x \in (0,1)$, where $S_N = a_1+ \cdots + a_N$.
However,
Diamond and Vaaler \cite{lesDV86} showed that the maximum $T_n(x)$ should be responsible for the failure of the strong law of large numbers, i.e.,
\[
\lim_{n \to \infty} \frac{S_n(x) - T_n(x)}{n\log n}  = \frac{1}{\log 2}
\]
for $\mu$-almost all $x \in (0,1)$. These results indicate that the maximum $T_n(x)$ play an important role in metric theory of continued fractions.
For more metric results on extreme value theory for continued fractions, we refer the reader to Barbolosi \cite{lesBar99}, Bazarova et al.~\cite{lesBBH16},  Iosifescu and Kraaikamp \cite{lesIK02} and Kesseb\"{o}hmer and Slassi \cite{lesKS08, lesKS08MN}.

%Following this line of the extreme value theory for continued fractions,
%we consider the natural question: whether there exists a norming sequence $\{b_N\}_{N\geq1}$ such that $T_N(x)/b_N$ converges to 1 for $\mu$-almost all $x \in (0,1)$. We will give a negative answer for this question.

%\begin{theorem}\label{extreme}
%Let $\sigma$ be a positive function defined on $\mathbb{N}$ such that $\sigma(N)/N$ is non-decreasing. Then
%\[
%\lim_{N \to \infty} \frac{T_N(x)}{\sigma(N)} = 0\ \ \ \ \ \text{or}\ \ \ \ \ \limsup_{N \to \infty} \frac{T_N(x)}{\sigma(N)} = +\infty
%\]
%holds for $\mu$-almost all $x \in (0,1)$ according to $\sum_{N \geq 1} 1/\sigma(N)$ converges or diverges.
%\end{theorem}

%\begin{remark}
%Theorem \ref {extreme} indicates that there is no sequence such that the ratio of $T_n(x)$ and it converges to a finite and positive constant for almost all $x$ in $(0,1)$ in the sense of Lebesgue measure.
%\end{remark}

The following will study the maximum $T_n(x)$ from the viewpoint of the fractal dimension. More precisely, we
are interested in the Hausdorff dimension of the fractal set
\[
E_\phi = \left\{x \in (0,1): \lim_{n \to \infty} \frac{T_n(x)}{\phi(n)} =1\right\},
\]
where $\phi$ is a positive function defined on $\mathbb{N}$ with $\phi(n) \to \infty$ as $n \to \infty$. Wu and Xu \cite{lesWX09} have obtained some partial results on this topic and they pointed out that $E_\phi$ has full Hausdorff dimension if $\phi(n)$ tends to infinity with a polynomial rate as $n$ goes to infinity.

\begin{theorem}[\cite{lesWX09}]
Assume that $\phi$ is a positive function defined on $\mathbb{N}$ satisfying $\phi(n) \to \infty$ as $n \to \infty$ and
\[
\lim_{n \to \infty} \frac{\log \phi(n)}{\log n} < \infty.
\]
Then
\begin{equation*}
\dim_{\rm H}\left\{x \in (0,1): \lim_{n \to \infty} \frac{T_n(x)}{\phi(n)} =1\right\} =1.
\end{equation*}
\end{theorem}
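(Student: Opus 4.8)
The plan is to prove the two inequalities separately. The upper bound $\dim_{\rm H} E_\phi \le 1$ is immediate since $E_\phi \subseteq (0,1)$, so the whole content is the lower bound. I would obtain it by exhibiting, for every integer $M \ge 2$, a Cantor-type subset $F_M \subseteq E_\phi$ whose Hausdorff dimension is at least
\[
s_M = \dim_{\rm H}\{x \in (0,1) : a_n(x) \le M \text{ for all } n \ge 1\},
\]
and then invoking the classical fact (Jarn\'ik, Good) that $s_M \to 1$ as $M \to \infty$. Since $T_n(x)$ is non-decreasing in $n$, I may and will assume that $\phi$ is non-decreasing.

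The construction of $F_M$ uses \emph{sparse spikes} chosen adaptively from $\phi$. Fix a sequence $\epsilon_k \downarrow 0$ and define spike positions by $n_1 = 1$ and
\[
n_{k+1} = \min\{n > n_k : \phi(n) \ge (1+\epsilon_k)\phi(n_k)\}.
\]
These are well defined because $\phi(n) \to \infty$, and because $\phi$ grows at most polynomially one has $\log\phi(N) = O(\log N)$; since the spikes in $[1,N]$ satisfy $\sum_k \log(1+\epsilon_k) \le \log\bigl(\phi(N)/\phi(1)\bigr)$, their number $K(N)$ is $o(N)$ (in fact polylogarithmic), so the spikes have density zero. I then let $F_M$ consist of those $x$ with $a_{n_k}(x) = \lfloor \phi(n_k)\rfloor$ at each spike position and $a_n(x) \in \{1, \dots, M\}$ elsewhere. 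For $x \in F_M$ and $n \in [n_k, n_{k+1})$ with $k$ large enough that $\phi(n_k) > M$, the running maximum equals the last spike value, $T_n(x) = \lfloor \phi(n_k)\rfloor$, so that
\[
\frac{\lfloor \phi(n_k)\rfloor}{(1+\epsilon_k)\,\phi(n_k)} \le \frac{T_n(x)}{\phi(n)} \le \frac{\lfloor \phi(n_k)\rfloor}{\phi(n_k)} \le 1,
\]
using $\phi(n_k) \le \phi(n) < (1+\epsilon_k)\phi(n_k)$ by monotonicity and the minimality defining $n_{k+1}$. Both bounds tend to $1$ as $k \to \infty$, hence $T_n(x)/\phi(n) \to 1$ and $F_M \subseteq E_\phi$.

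It remains to bound $\dim_{\rm H} F_M$ from below, and this is the step I expect to be the real obstacle. Writing $I_N$ for the $N$-th order cylinder of a point of $F_M$ and using $\abs{I_N} \asymp q_N^{-2}$ together with $\log q_N = \sum_{k=1}^N \log a_k + O(N)$, I would split the length estimate into the free coordinates and the spike coordinates. The spikes contribute $2\sum_{k : n_k \le N} \log\lfloor\phi(n_k)\rfloor \le 2\,K(N)\log\phi(N) = o(N)$ to $-\log\abs{I_N}$ and nothing to the number of cylinders, so the exponential scale of both the cardinalities and the lengths of the cylinders of $F_M$ coincides with that of the bounded set $\{a_n \le M\}$. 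A mass distribution argument, spreading the natural $s_M$-conformal measure over the free coordinates, then yields $\dim_{\rm H} F_M \ge s_M$, and letting $M \to \infty$ gives $\dim_{\rm H} E_\phi = 1$. The delicate points are the verification that the $o(N)$ spike contribution genuinely does not depress the local dimension below $s_M$, so that the mass distribution principle applies uniformly, and the adaptive choice of spikes, which is exactly where the polynomial growth hypothesis $\lim_{n\to\infty}\log\phi(n)/\log n < \infty$ is used to keep the spikes sparse.
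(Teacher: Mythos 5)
Two things up front: the paper never proves this statement---it is quoted from Wu and Xu \cite{lesWX09}---and the paper's own lower-bound machinery (the Cantor sets of Lemma \ref{FN}, in which \emph{every} digit $a_n$ is forced to have size comparable to $\phi(n)$) provably cannot reach it: by the paper's closing Remark, that construction yields only $1/2$ when $\phi$ grows polynomially. So your sparse-spike construction, with free digits in $\{1,\dots,M\}$ off the spikes and Jarn\'ik's theorem ($s_M\to 1$) at the end, is the right kind of argument, and necessarily different from anything in the paper. But as written it has a genuine flaw in the sparsity step. From $\sum_{k<K(N)}\log(1+\epsilon_k)\le \log(\phi(N)/\phi(1))=O(\log N)$ you conclude that $K(N)=o(N)$, ``in fact polylogarithmic''; this is false for an arbitrary sequence $\epsilon_k\downarrow 0$, because if $\epsilon_k$ is summable the left-hand side stays bounded and the inequality says nothing. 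Concretely, take $\phi(n)=n$ and $\epsilon_k=2^{-k}$: by induction $n_k=k$ for every $k$ (since $(1+2^{-k})k\le k+1$), so every position is a spike, $F_M$ degenerates to a single point, and your construction gives dimension $0$. The fix is easy but must be part of the proof: choose $\epsilon_k\to 0$ slowly, say $\epsilon_k=1/\log(k+2)$, so that $\sum_{k<K}\log(1+\epsilon_k)\ge cK/\log K$ and hence $K(N)=O(\log N\,\log\log N)$.

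The second gap is the one you flag yourself but do not close, and it is the entire analytic content of the theorem: $\dim_{\rm H}F_M\ge s_M$. The heuristic that the spikes contribute only $o(N)$ to $-\log\abs{I_N}$ identifies the right mechanism, but the mass distribution principle needs a uniform bound $\mu(U)\le C\abs{U}^{s}$ for \emph{all} small intervals $U$, and the dangerous scales are exactly those between $\abs{I(\sigma_1,\dots,\sigma_{n_k-1})}$ and $\abs{I(\sigma_1,\dots,\sigma_{n_k-1},\lfloor\phi(n_k)\rfloor)}$: there the cylinder length drops by a factor $\asymp\phi(n_k)^{-2}$ while the measure does not drop at all, since the spike digit is forced and the parent has a single child. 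One must check this flat spot is harmless, i.e.\ that $\phi(n_k)^{2}=\abs{I_{n_k-1}}^{-o(1)}$, which holds because $\log\phi(n_k)=O(\log n_k)$ while $-\log\abs{I_{n_k-1}}\ge c\,n_k$; this is precisely where the polynomial-growth hypothesis is consumed, and until this covering argument is written out (in the style of Lemma \ref{4.6}, or via Billingsley's lemma on local dimensions) you have a plan rather than a proof. Finally, a smaller but real point: ``WLOG $\phi$ is non-decreasing'' is not a harmless reduction under the literal hypotheses. The function $\phi(n)=n$ for $n$ even, $\phi(n)=2n$ for $n$ odd satisfies both hypotheses, yet $E_\phi=\emptyset$ because $T_n$ is non-decreasing, so no reduction to a monotone $\phi$ can be dimension-preserving there. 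Monotonicity (or some substitute) is a genuine additional assumption, implicit in the quoted statement; your proof covers exactly that case, which is fine, but it should be stated as a hypothesis rather than claimed as a loss-free reduction.
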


Liao and Rams \cite{lesLR16} considered the Hausdorff dimension of $E_\phi$ when $\phi(n)$ tends to infinity with a single exponential rate. Also, they showed that there is a jump of the Hausdorff dimensions from 1 to 1/2 on the class $\phi(n) = e^{n^\alpha}$ at $\alpha=1/2$.

\begin{theorem}[\cite{lesLR16}]
For any $\beta >0$,
\begin{equation*}
\dim_{\rm H}\left\{x \in (0,1): \lim_{n \to \infty} \frac{T_n(x)}{e^{n^\alpha}} =\beta\right\} =
\begin{cases}
1,  &\text{$\alpha \in (0,1/2)$};\\
1/2,  & \text{$\alpha \in (1/2,+\infty)$}.
\end{cases}
\end{equation*}
\end{theorem}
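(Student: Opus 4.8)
The plan is to treat the two regimes separately, establishing matching upper and lower bounds in each, and to locate the threshold at $\alpha=1/2$ through the arithmetic of the record partial quotients. Write $b_n=\beta e^{n^\alpha}$ and let $E$ denote the set in the statement, so $x\in E$ iff $T_n(x)/b_n\to1$. I will use the standard facts that the rank-$n$ cylinder $I_n(a_1,\dots,a_n)$ has length comparable to $q_n^{-2}$, that $q_n=a_nq_{n-1}+q_{n-2}$, and that $\prod_{k\le n}a_k\le q_n\le 2^n\prod_{k\le n}a_k$, whence $\log q_n=\sum_{k\le n}\log a_k+O(n)$. The first step is a reduction: for $x\in E$ and every $\varepsilon>0$ one has $(1-\varepsilon)b_n\le T_n(x)\le(1+\varepsilon)b_n$ eventually, and since $T_n$ is the running maximum of the integer sequence $(a_k)$ it is a nondecreasing step function. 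I would study its jump times $n_1<n_2<\cdots$ (the \emph{records}, where $a_{n_j}=T_{n_j}$ is a new maximum) and show the limit condition forces $a_{n_j}\asymp b_{n_j}$; moreover, since on $[n_j,n_{j+1})$ the maximum is frozen at $\asymp b_{n_j}$ while $b_n$ grows, necessarily $n_{j+1}^\alpha-n_j^\alpha\to0$, i.e.\ the gaps are $o(n_j^{1-\alpha})$. Hence the number $R(N)$ of records up to $N$ is of order at least $N^\alpha$, and the key cost quantity $S(N):=\sum_{n_j\le N}\log a_{n_j}\asymp\sum_{n_j\le N}n_j^\alpha$ is of order at least $N^{2\alpha}$.

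The upper bound is the cleaner direction and already exhibits the threshold. For $\alpha>1/2$ and any $s>1/2$ I would bound the $s$-cost of the natural cover of $E$ by admissible rank-$N$ cylinders,
\[
\Sigma_N(s)\asymp\sum_{\text{admissible }(a_1,\dots,a_N)}q_N^{-2s}\asymp\sum\prod_{k=1}^{N}a_k^{-2s}.
\]
This sum essentially factorizes over positions: a non-record quotient ranges over the positive integers and contributes a bounded factor $\zeta(2s)<\infty$ (here $2s>1$), while a record $n_j$ ranges over $\asymp b_{n_j}$ integers near $b_{n_j}$ and contributes $\asymp b_{n_j}^{\,1-2s}=e^{(1-2s)n_j^\alpha}$. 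Summing also over the record pattern (an $O(2^N)$ combinatorial factor) and absorbing the $O(N)$ from $\log q_n=\sum\log a_k+O(n)$, one gets
\[
\log\Sigma_N(s)\le C(s)\,N-(2s-1)\,S(N).
\]
Since $S(N)$ is of order at least $N^{2\alpha}$ with $2\alpha>1$, the negative term dominates and $\Sigma_N(s)\to0$, giving $\dim_{\rm H}E\le1/2$ for $\alpha>1/2$; for $\alpha<1/2$ one only keeps the trivial bound $\dim_{\rm H}E\le1$.

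For the lower bound I would build a Cantor subset $F\subseteq E$ and apply the mass distribution principle. Fix $\varepsilon>0$ and a large $C$; choose records as sparse as the limit permits (gaps $\asymp n^{1-\alpha}$, tuned by a slowly varying factor so that $T_n/b_n\to1$ genuinely holds on $F$), let $a_{n_j}$ range over the $\asymp b_{n_j}$ integers in $[(1-\varepsilon)b_{n_j},(1+\varepsilon)b_{n_j}]$, and let the remaining quotients range over $\{1,\dots,C\}$. Equip $F$ with the measure that is $s_C$-conformal along the bounded stretches ($s_C$ being the dimension of the $\{1,\dots,C\}$-alphabet system, $s_C\to1$ as $C\to\infty$) and uniform across the record children. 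A record child then has $\mu$-mass $\asymp b_{n_j}^{-1}$ and length $\asymp b_{n_j}^{-2}$ relative to its parent, contributing local exponent $\tfrac12$, whereas each bounded stretch contributes $s_C$. Writing $U=S(N)$ and $V\asymp N$ for the record and bounded parts of $\tfrac12\log q_N$, the local dimension at level $N$ is $\dfrac{U+s_CV}{2U+V}$: for $\alpha<1/2$ one has $U\asymp N^{2\alpha}\ll V$, so this tends to $s_C\to1$ and yields $\dim_{\rm H}E=1$; for $\alpha>1/2$ one has $U\gg V$ and the ratio stays $\ge1/2$ (since $s_C\ge1/2$ for large $C$), yielding $\dim_{\rm H}E\ge1/2$. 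The comparison of $S(N)\asymp N^{2\alpha}$ with $N$ is precisely what produces the jump at $\alpha=1/2$.

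I expect the main obstacle to be the lower bound, specifically verifying the local dimension of $\mu$ at \emph{intermediate} scales. Each record drops the cylinder length by the enormous factor $a_{n_j}^2\asymp e^{2n_j^\alpha}$, so for radii $r$ lying between two consecutive record-level lengths the ball $B(x,r)$ meets a scale populated by no cylinder of comparable size, and one must count how many sibling cylinders $I_{n_j}(\dots,a_{n_j})$ such a ball can intersect and bound their total $\mu$-mass. Controlling this—together with the two-sided distortion estimates for $q_n$ and the requirement that the chosen gap sequence genuinely keep $T_n/b_n\to1$—is the delicate, Good--Jarn\'ik-type heart of the argument; the upper bound, by contrast, reduces to the elementary summation above.
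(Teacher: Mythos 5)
This theorem is quoted by the paper from Liao and Rams \cite{lesLR16} without proof, so the only internal benchmark is the paper's proof of its own Theorem \ref{doubly exponential}; your route is genuinely different from that proof, and necessarily so. The paper's upper bound embeds the limit set into $\{x: a_n(x)\geq\psi(n)\ i.o.\}$ and invokes Wang--Wu (Lemma \ref{WW}); but for $\psi(n)=e^{dn^\alpha}$ with $\alpha<1$ that i.o.\ set has Hausdorff dimension $1$ (the Wang--Wu dimension only drops to $1/2$ or below once $\psi$ grows at least exponentially), so the embedding gives nothing in the crucial range $1/2<\alpha\leq1$. Likewise the paper's lower-bound construction forces \emph{every} partial quotient into $[f(n),g(n)]$ and, as its closing Remark records, yields exactly $1/2$ for every single-exponential $\phi$, so it cannot produce the value $1$ when $\alpha<1/2$. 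Your record analysis supplies precisely the missing mechanism: on $E$ the record times satisfy $n_{j+1}^\alpha-n_j^\alpha\to0$, hence there are at least of order $N^\alpha$ records up to $N$ when $\alpha<1$, hence the cost $S(N)=\sum_{n_j\leq N}\log a_{n_j}$ is at least of order $N^{2\alpha}$, to be weighed against the entropy $O(N)$ of the unconstrained digits (for $\alpha\geq1$ the gap condition forces all large indices to be records, giving $S(N)\asymp N^{1+\alpha}$, still much larger than $N$). This is the correct explanation of the threshold at $\alpha=1/2$, and your upper-bound computation, covering by admissible cylinders with a factor $\zeta(2s)$ per free position, $e^{(1-2s)n_j^\alpha}$ per record, and $2^N$ record patterns, is essentially complete and correct.

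The genuine gap is the lower bound, and it is exactly the step you defer. The mass distribution principle requires $\mu(B(x,r))\leq Cr^s$ for \emph{all} small $r$; your exponent $(U+s_CV)/(2U+V)$ only controls $r$ at the discrete sequence of cylinder lengths. After a record at $n_j$ the length collapses by the factor $a_{n_j}^2\asymp e^{2n_j^\alpha}$, and, worse, all admissible children of a level-$(n_j-1)$ interval are crowded into a subinterval of relative length about $b_{n_j}^{-1}$ of their parent, so at scale $r\approx |I_{n_j-1}|/b_{n_j}$ a ball already carries essentially the full parent mass. One must verify that the resulting dip in the local exponent, of size about $\log b_{n_j}/\log(1/|I_{n_j-1}|)\asymp n_j^{\alpha-1}$, tends to zero (this is where $\alpha<1$ enters) and carry out the sibling-counting estimate you allude to; until that is done, no lower bound is proved. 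A second defect: the Cantor set as you specify it is not contained in $E$, since with a fixed $\varepsilon$ the record heights only give $\limsup_n|T_n(x)/b_n-1|\leq\varepsilon$, and gaps of size exactly of order $n^{1-\alpha}$ keep $T_n/b_n$ oscillating by a fixed factor; both the windows and the increments $n_{j+1}^\alpha-n_j^\alpha$ must shrink to $0$ along the construction, and the dimension count must then be redone with these varying parameters. Note that for $\alpha>1/2$ the lower bound $1/2$ can be rescued exactly as in the paper (all digits large, Lemma \ref{4.6}, whose hypotheses automatically control the intermediate scales through the gap sequence $\varepsilon_n$); it is the case $\alpha<1/2$, where the mixed construction is unavoidable, that remains genuinely unproved in your proposal.
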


However, they don't know what will happen at the critical point $\alpha=1/2$. Recently, Ma \cite{lesMa} solved this left unknown problem and proved that
its Hausdorff dimension is 1/2 when $\alpha=1/2$.

In the present paper, we will further investigate the Hausdorff dimension of $E_\phi$ when $\phi(n)$ tends to infinity with a doubly exponential rate. Moreover, we will see that the Hausdorff dimension of $E_\phi$ will decay to zero if the speed of $\phi(n)$ is growing faster and faster, which can be treated as a supplement to Wu and Xu \cite{lesWX09}, Liao and Rams \cite{lesLR16}, and Ma \cite{lesMa} in this topic.

%indicates that there is also a jump  of the Hausdorff dimensions of $E_\phi$. Our result

\begin{theorem}\label{doubly exponential}
Let $b,c >1$ be real numbers. Then for any $\beta > 0$,
\begin{equation*}
\dim_{\rm H} \left\{x \in (0,1): \lim_{n \to \infty} \frac{T_n(x)}{c^{b^{n^\alpha}}}= \beta\right\}=
\begin{cases}
1/2,  &\text{$\alpha \in (0,1)$};\\
1/(b+1),  &\text{$\alpha=1$};\\
0,  & \text{$\alpha \in (1,+\infty)$}.
\end{cases}
\end{equation*}
\end{theorem}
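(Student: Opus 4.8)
The plan is to trade the condition on the running maximum $T_n$ for one on the individual partial quotients. Write $\phi(n)=c^{b^{n^\alpha}}$ and note that $\log\!\big(\phi(n)/\phi(n-1)\big)=\big(b^{n^\alpha}-b^{(n-1)^\alpha}\big)\log c\to\infty$ for every $\alpha>0$, so $\phi(n)/\phi(n-1)\to\infty$. I would use this to show
\[
E_\phi=\Big\{x\in(0,1):\lim_{n\to\infty}a_n(x)/\phi(n)=\beta\Big\}:
\]
if $T_n/\phi(n)\to\beta$, then for small $\epsilon$ and large $n$ one has $T_{n-1}\le(\beta+\epsilon)\phi(n-1)<(\beta-\epsilon)\phi(n)\le T_n$, so the maximum strictly increases at step $n$ and $a_n=T_n\sim\beta\phi(n)$; conversely $a_n\sim\beta\phi(n)$ together with the monotonicity of $\phi$ makes $a_n$ the eventual running maximum. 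The problem thus becomes one of \emph{exact} growth of partial quotients, governed by $B:=\lim_n\log\phi(n+1)/\log\phi(n)=\lim_n b^{(n+1)^\alpha-n^\alpha}$, which equals $1,\,b,\,+\infty$ in the three regimes; in all cases the claimed value is $1/(1+B)$.

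Everything then hinges on one geometric fact. Let $q_n$ denote the denominator of the $n$th convergent and $I_n=I_n(a_1,\dots,a_n)$ the corresponding cylinder, of length $\asymp q_n^{-2}$ with $\log q_n=\sum_{k\le n}\log a_k+O(n)$. On our set $a_k\asymp\beta\phi(k)$, so with $L_k:=\log\phi(k)=b^{k^\alpha}\log c$ and $S_n:=\sum_{k\le n}L_k$ we get $\log q_n\sim S_n$. Since $a_{n+1}\asymp\beta\phi(n+1)$ is huge, the admissible children of a fixed $I_n$ do not fill it: they form one contiguous cluster of total length $g_n\asymp q_n^{-2}\phi(n+1)^{-1}$, so $\log(1/g_n)\sim 2S_n+L_{n+1}$. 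This cluster scale $g_n$, strictly between $|I_{n+1}|\asymp q_{n+1}^{-2}$ and $|I_n|\asymp q_n^{-2}$, is what controls the dimension, and
\[
\lim_{n\to\infty}\frac{S_n}{2S_n+L_{n+1}}=\frac{1}{2+\lim_n L_{n+1}/S_n}=\frac{1}{1+B},
\]
since $L_{n+1}/S_n\to 0,\,b-1,\,+\infty$ respectively.

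For the upper bound I would fix small $\epsilon>0$ and, for each large $N$, cover the set of $x$ with $a_k(x)\in W_k:=[(\beta-\epsilon)\phi(k),(\beta+\epsilon)\phi(k)]$ for all $k\ge N$. The efficient cover takes, for each admissible prefix $(a_N,\dots,a_{n-1})$, the single interval of length $\asymp g_{n-1}$ containing all admissible $a_n\in W_n$; since the relevant exponent is $<1$, this is cheaper than covering that cluster by its $\asymp\phi(n)$ sub-cylinders. As the window sum $\sum_{a_k\in W_k}a_k^{-2s}\asymp\phi(k)^{1-2s}$ is finite and hence valid for every $s>0$, the resulting $s$-sum is $\asymp\phi(n)^{-s}\prod_{k=N}^{n-1}\phi(k)^{1-2s}$, with logarithm $\asymp -sL_n+(1-2s)S_{n-1}$; this tends to $-\infty$ precisely when $s>1/(1+B)$, giving $\dim_{\rm H}E_\phi\le 1/(1+B)$ after $\epsilon\to0$. (Covering instead by full depth-$n$ cylinders only yields $1/2$ and is genuinely lossy here, which is the first sign that the cluster scale, not the cylinder scale, is decisive.)

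For the lower bound I would build a Cantor set $F\subseteq E_\phi$ by letting $a_n$ range over the integers in $[\beta\phi(n)(1-1/n),\beta\phi(n)]$ (ensuring $a_n/\phi(n)\to\beta$, with $M_n\asymp\beta\phi(n)/n$ choices and $\log M_n\sim L_n$), put the uniform measure $\mu$ on $F$, and invoke the mass distribution principle. The decisive step is to estimate $\mu(B(x,r))$ across \emph{all} scales: one shows $\mu(B(x,r))$ is essentially $\mu(I_n)$ on the plateau $g_n\le r\le|I_n|$, and that the ratio $\log\mu(B(x,r))/\log r$ is least exactly at the cluster scale $r=g_n$, with value $\log\mu(I_n)/\log(1/g_n)\sim S_n/(2S_n+L_{n+1})\to 1/(1+B)$. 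I expect this multiscale ball estimate to be the main obstacle: at every cylinder scale the exponent is $\approx 1/2$, and only at the intermediate scale $g_n$—the gap manufactured by the very large partial quotients—does the correct, smaller exponent surface. Carefully locating the cluster inside each $I_n$, controlling its span, and verifying that no other scale produces a smaller ratio is what makes the bound $\dim_{\rm H}E_\phi\ge 1/(1+B)$ go through and completes the proof.
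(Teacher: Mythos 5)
Your proposal is correct and reaches the stated dimensions, but only half of it follows the paper's route. Your lower bound is essentially the paper's: the paper builds the same Cantor set (partial quotients confined to windows $[(\beta-\tfrac1n)\phi(n),(\beta+\tfrac1n)\phi(n)]$ from some index $N$ on, with $\asymp\phi(n)/n$ choices at step $n$) and then invokes Falconer's Example 4.6, whose proof is precisely the multiscale mass-distribution estimate you flag as ``the main obstacle''; in the paper the critical cluster scale appears as the gap estimate $\varepsilon_n\asymp\prod_{k\le n}\phi(k)^{-2}$ paired with the child count $m_{n+1}\asymp\phi(n+1)/(n+1)$, yielding the exponent $\liminf_n S_n/(2S_n+L_{n+1})$, which is exactly your $1/(1+B)$. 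So that obstacle is a standard packaged lemma, not a genuine difficulty, and your plateau analysis is the right way to prove it. Where you genuinely diverge is the upper bound: the paper does no covering at all. It shows that $T_n/\phi(n)\to\beta$ forces $a_{n+1}(x)\ge T_{n+1}(x)-T_n(x)\ge c^{d^{(n+1)^\alpha}}$ for all large $n$ and any $1<d<b$, hence $E(b,c,\alpha,\beta)\subseteq\{x: a_n(x)\ge c^{d^{n^\alpha}}\ \text{i.o.}\}$, and then quotes Wang--Wu's theorem for the dimension of that infinitely-often set, letting $d\to b^{+}$ when $\alpha=1$. Your direct covering at the cluster scale replaces this citation with a self-contained computation, and your observation that covering by full depth-$n$ cylinders only yields $1/2$ (so the cluster scale is what pushes the bound below $1/2$ when $\alpha\ge1$) is exactly the right insight; the trade-off is that Wang--Wu gives the paper the upper bound in three lines. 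Two small points to tighten in your version: for small $n$ your windows $[\beta\phi(n)(1-\tfrac1n),\beta\phi(n)]$ may contain no positive integers (the paper handles $k<N$ by allowing $1\le a_k\le \phi(k)/k+1$), and in the upper bound you should fix the finitely many unconstrained initial digits and the starting index $N$ via countable stability of Hausdorff dimension before summing over prefixes.
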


For more results about the Hausdorff dimensions of fractal sets related to $T_n$, see Zhang \cite{lesZhang16}, and Zhang and L\"{u} \cite{lesZL16}.
The following figure is an illustration of the Haudorff dimension of $E_\phi$ for different $\phi$.

\begin{figure}[H]
\centering
\includegraphics[height=1.8in, width=4.5in]{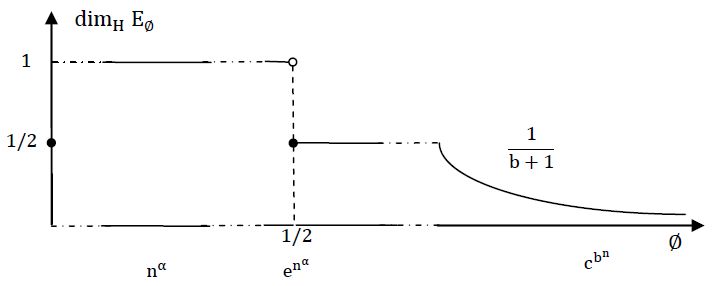}
\caption{{$\dim_{\rm H} E_\phi$} for different $\phi$}
\end{figure}

\section{Preliminaries}
This section is devoted to recalling some definitions and basic properties of continued fractions.

Let $x \in (0,1)$ be a irrational number and its continued fraction expansion $x=[a_1(x),a_2(x),\cdots,a_n(x),\cdots]$. For any $n \geq 1$, we denote by
\[
\frac{p_n(x)}{q_n(x)}:= [a_1(x), a_2(x), \cdots, a_n(x)]
\]
the $n$-th \emph{convergent} of the continued fraction expansion of $x$, where $p_n(x)$ and $q_n(x)$ are relatively prime.
With the conventions $p_{-1}=1$, $q_{-1}=0$, $p_0=0$, $q_0=1$, the quantities $p_n(x)$ and $q_n(x)$ satisfy the following recursive formula:
\begin{equation}\label{recursive}
p_n(x) = a_n(x) p_{n-1}(x) + p_{n-2}(x)\ \ \ \text{and}\ \ \ q_n(x) = a_n(x) q_{n-1}(x) + q_{n-2}(x),
\end{equation}
which implies that
\begin{equation}\label{DX}
a_1(x)a_2(x)\cdots a_n(x) \leq q_n(x) \leq (a_1(x)+1)(a_2(x)+1) \cdots (a_n(x)+1).
\end{equation}

\begin{definition}
For any $n \geq 1$ and $a_1, a_2, \cdots, a_n\in \mathbb{N}$, we call
\begin{equation*}
I(a_1, \cdots, a_n):= \left\{x \in (0,1): a_1(x)=a_1, \cdots, a_n(x)=a_n\right\}
\end{equation*}
the $n$-th order \emph{cylinder} of continued fraction expansion.
\end{definition}
In other words, $I(a_1, \cdots, a_n)$ is the set of points beginning with $(a_1 ,a_2, \cdots, a_n)$ in their continued fraction expansions.

\begin{proposition}\label{CY}
Let $n \geq 1$ and $a_1, a_2, \cdots, a_n \in \mathbb{N}$. Then $I(a_1, \cdots, a_n)$ is an interval with two endpoints
\[
\frac{p_n}{q_n}\ \ \ \ \ \text{and}\ \ \ \ \  \frac{p_n+p_{n-1}}{q_n+q_{n-1}}.
\]
More precisely, $p_n/q_n$ is the left endpoint if $n$ is even; otherwise it is the right endpoint. Moreover, the length of $I(a_1, \cdots, a_n)$ satisfies
\begin{equation}\label{cylinder inequality}
|I(a_1, \cdots, a_n)| = \frac{1}{q_n(q_n+q_{n-1})},
\end{equation}
where $p_n$ and $q_n$ satisfy the recursive formula (\ref{recursive}).
\end{proposition}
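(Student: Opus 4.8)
The plan is to realize the cylinder as the image of a Möbius transformation and read off its endpoints and length from the monotonicity of that map. Everything rests on the classical determinant identity
\[
p_n q_{n-1} - p_{n-1} q_n = (-1)^{n-1}, \qquad n \geq 0,
\]
which I would first establish by induction on $n$: the conventions $p_{-1}=1,\ q_{-1}=0,\ p_0=0,\ q_0=1$ supply the base case, and the recursion (\ref{recursive}) propagates the identity one step at a time, since substituting $p_n = a_n p_{n-1} + p_{n-2}$ and $q_n = a_n q_{n-1} + q_{n-2}$ gives $p_n q_{n-1} - p_{n-1} q_n = -(p_{n-1}q_{n-2} - p_{n-2}q_{n-1})$, so the sign flips at each stage.

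Next I would parametrize the cylinder. Each irrational $x \in I(a_1, \ldots, a_n)$ has the form $x = [a_1, \ldots, a_n, a_{n+1}(x), a_{n+2}(x), \ldots]$, and writing $t := [a_{n+1}(x); a_{n+2}(x), \ldots]$ for the $(n+1)$-th complete quotient, the standard identity
\[
x = \frac{t\, p_n + p_{n-1}}{t\, q_n + q_{n-1}} =: f(t)
\]
holds. I would prove this by induction on $n$ (equivalently, it follows from the recursion with the final partial quotient replaced by the real number $t$). The crucial point is the range of $t$: since $a_{n+1}(x) \geq 1$, the complete quotient $t$ runs exactly over the irrationals in $(1, \infty)$ as $x$ ranges over $I(a_1, \ldots, a_n)$, so the closure of the cylinder equals $f([1,\infty))$.

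Then I would differentiate. By the determinant identity,
\[
f'(t) = \frac{p_n(t q_n + q_{n-1}) - q_n(t p_n + p_{n-1})}{(t q_n + q_{n-1})^2} = \frac{p_n q_{n-1} - p_{n-1} q_n}{(t q_n + q_{n-1})^2} = \frac{(-1)^{n-1}}{(t q_n + q_{n-1})^2},
\]
so $f$ is strictly monotone on $[1,\infty)$, increasing when $n$ is odd and decreasing when $n$ is even. Its boundary values are $f(1) = (p_n + p_{n-1})/(q_n + q_{n-1})$ and $\lim_{t \to \infty} f(t) = p_n/q_n$, which exhibits $I(a_1, \ldots, a_n)$ as the interval with these two endpoints. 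Comparing with the sign of $f'$: for $n$ even $f$ decreases, so $p_n/q_n = f(\infty)$ is the infimum, i.e.\ the left endpoint, while for $n$ odd $f$ increases, so $p_n/q_n$ is the supremum, i.e.\ the right endpoint, exactly as stated.

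Finally, the length is a direct computation using the determinant identity once more:
\[
|I(a_1, \ldots, a_n)| = \left| \frac{p_n}{q_n} - \frac{p_n + p_{n-1}}{q_n + q_{n-1}} \right| = \frac{|p_n q_{n-1} - p_{n-1} q_n|}{q_n(q_n + q_{n-1})} = \frac{1}{q_n(q_n + q_{n-1})}.
\]
The only genuinely delicate step is the second paragraph, namely pinning down that the complete quotient ranges over $[1,\infty)$ (so that $f(1)$, and not $f(0)$, supplies the second endpoint); once the parametrization and its range are in place, the monotonicity, endpoint identification, and length computation are all routine algebra driven by the single identity $p_n q_{n-1} - p_{n-1} q_n = (-1)^{n-1}$.
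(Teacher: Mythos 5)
Your proof is correct, and in fact the paper offers no proof of Proposition \ref{CY} at all --- it is stated as a classical preliminary --- so your argument, via the determinant identity $p_nq_{n-1}-p_{n-1}q_n=(-1)^{n-1}$, the M\"obius parametrization $x=(tp_n+p_{n-1})/(tq_n+q_{n-1})$ by the complete quotient $t$, and the resulting monotonicity, endpoint, and length computations, is exactly the standard textbook proof (as in Khintchine \cite{lesKhi64} or Iosifescu--Kraaikamp \cite{lesIK02}). One small wording to tighten: since $t$ ranges only over the \emph{irrationals} in $(1,\infty)$, the set $I(a_1,\ldots,a_n)$ as defined consists of irrationals and its closure is the closed interval with endpoints $f(1)=(p_n+p_{n-1})/(q_n+q_{n-1})$ and $\lim_{t\to\infty}f(t)=p_n/q_n$ (writing $f([1,\infty))$ for the closure is slightly off, as $p_n/q_n\notin f([1,\infty))$); this does not affect the endpoint identification or the length formula.
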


\section{Proof of Theorem \ref{doubly exponential}}

In this section, we will give the proof of Theorem \ref{doubly exponential} which is inspired by Xu \cite{lesXu08}.
More precisely, we will determine the Hausdorff dimension of the fractal set
\[
E(b,c,\alpha,\beta) = \left\{x \in (0,1): \lim_{n \to \infty} \frac{T_n(x)}{c^{b^{n^\alpha}}}= \beta\right\}
\]
for $b,c >1$ and $\alpha,\beta>0$. The proof is divided into two parts: upper bound and lower bound for $\dim_{\rm H} E(b,c,\alpha,\beta)$.

\subsection{Upper bound}
Let $b,c>1$ be real numbers. For any $\alpha>0$, we define
\[
E^\ast(b,c,\alpha):= \left\{x \in (0,1): a_n(x) \geq c^{b^{n^\alpha}} i.o.\right\},
\]
where $i.o.$ denotes infinitely often. The following result is a special case of Theorem 4.2 in Wang and Wu \cite{lesWW08}.

\begin{lemma}(\cite[Theorem 4.2]{lesWW08})\label{WW}
Let $b,c>1$ be a real number. Then
\[
\dim_{\rm H}E^\ast(b,c,\alpha)=
\begin{cases}
1/2,  &\text{$\alpha \in (0,1)$};\\
1/(b+1),  &\text{$\alpha=1$};\\
0,  & \text{$\alpha \in (1,+\infty)$}.
\end{cases}
\]
\end{lemma}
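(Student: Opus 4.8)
The plan is to deduce the lemma from the general theorem of Wang and Wu \cite{lesWW08}, of which it is the special case $\psi(n)=c^{b^{n^\alpha}}$, so that the actual work is a one-line verification of a growth index. Wang and Wu's dimension formula for $\{x:a_n(x)\geq\psi(n)\ \text{i.o.}\}$ is governed by the two indices $\liminf_n\frac{\log\psi(n)}{n}$ and $\log b_\ast:=\liminf_n\frac{\log\log\psi(n)}{n}$. For $\psi(n)=c^{b^{n^\alpha}}$ one has $\log\psi(n)=(\log c)\,b^{n^\alpha}$ and $\log\log\psi(n)=n^\alpha\log b+\log\log c$, so the first index equals $+\infty$ for every $\alpha>0$ (since $b^{n^\alpha}/n\to\infty$), while
\[
\log b_\ast=\liminf_{n\to\infty}\Big(n^{\alpha-1}\log b+\tfrac{\log\log c}{n}\Big)=\begin{cases}0,&\alpha\in(0,1),\\ \log b,&\alpha=1,\\ +\infty,&\alpha\in(1,\infty).\end{cases}
\]
In the branch where the first index is $+\infty$ the Wang--Wu value is $\tfrac{1}{1+b_\ast}$, so substituting $b_\ast=1,\,b,\,\infty$ returns $1/2$, $1/(b+1)$ and $0$, which is exactly the assertion. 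Thus, granting their Theorem 4.2, the whole content is this elementary computation of $\liminf_n\frac{\log\log\psi(n)}{n}$.

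Should one wish to argue directly, write $E^\ast(b,c,\alpha)=\bigcap_{N\geq1}\bigcup_{n\geq N}A_n$ with $A_n=\{x:a_n(x)\geq\psi(n)\}$, a $\limsup$ set. For the lower bound I would build a homogeneous Cantor subset $K\subseteq\{x:a_n(x)\geq\psi(n)\text{ for all }n\}\subseteq E^\ast(b,c,\alpha)$ by forcing $a_n(x)\in[\psi(n),2\psi(n)]$ at every step, and equip $K$ with the natural measure giving equal mass to the cylinders of each generation. By \eqref{DX} and Proposition~\ref{CY}, a generation-$n$ cylinder has $q_n\asymp\prod_{i\leq n}\psi(i)=:N_n$, hence length $\asymp N_n^{-2}$, while it carries mass $\asymp N_n^{-1}$. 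The decisive point is that the local dimension of this measure is $\asymp1/2$ at the cylinder scales but drops at the transition scales between generations $n$ and $n+1$: there the children of a generation-$n$ cylinder, whose quotient $a_{n+1}$ has jumped to size $\asymp\psi(n+1)$, occupy a subinterval of length $\asymp N_n^{-2}\psi(n+1)^{-1}$ that still carries the full mass $\asymp N_n^{-1}$, so the limiting local exponent there is
\[
\lim_{n\to\infty}\frac{\log N_n}{2\log N_n+\log\psi(n+1)}.
\]
With $\log N_n=(\log c)\sum_{i\leq n}b^{i^\alpha}$ a short computation shows this limit equals $1/2$, $1/(b+1)$, $0$ in the three regimes, and the mass distribution principle then yields the lower bound.

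For the upper bound I would cover $\bigcup_{n\geq N}A_n$ and let $N\to\infty$. The naive covers do not suffice: covering each $A_n$ by the single interval $\{x\in I(a_1,\dots,a_{n-1}):a_n(x)\geq\psi(n)\}$, of length $\asymp q_{n-1}^{-2}\psi(n)^{-1}$ by Proposition~\ref{CY}, produces the series $\sum_n\psi(n)^{-s}\sum_{(a_1,\dots,a_{n-1})}q_{n-1}^{-2s}$, whose inner partition sum diverges for $s\leq1/2$ and can therefore deliver only the bound $1/2$. To break the $1/2$ barrier one must instead cover each $A_n$ at an intermediate scale $\rho_n$ lying between the lengths of successive cylinder generations, bounding the number $M_n(\rho_n)$ of $\rho_n$-intervals meeting $A_n$; here the harmless total mass $|A_n|\asymp\psi(n)^{-1}$ (the partition sum at the exponent $s=1$, which is finite) is what tames $M_n(\rho_n)$, and optimizing $\rho_n$ against $\psi(n)$ reproduces the exponents $1/2$, $1/(b+1)$, $0$.

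The main obstacle is precisely this upper bound. The transition-scale covering, and in particular the bookkeeping over cylinders $I(a_1,\dots,a_{n-1})$ with widely varying denominators $q_{n-1}$ (including the infinitely many with very large $q_{n-1}$, whose clumps are tiny yet numerous and together could force a wasteful cover), is the delicate step where the drop from $1/2$ to $1/(b+1)$ and then to $0$ is actually produced. This is exactly the covering lemma carried out in \cite{lesWW08}, which is why invoking their Theorem 4.2 is the most economical route and the one I would adopt here.
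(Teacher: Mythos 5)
Your proposal is correct and takes the same route as the paper: the paper offers no proof of this lemma beyond citing Theorem 4.2 of Wang and Wu \cite{lesWW08}, and your primary argument is exactly that citation together with the routine verification that $\liminf_n \frac{\log\psi(n)}{n}=+\infty$ and $\liminf_n \frac{\log\log\psi(n)}{n}$ equals $0$, $\log b$, or $+\infty$ in the three regimes, which correctly yields $\frac{1}{1+b_\ast}=1/2$, $1/(b+1)$, $0$. Your index computation is accurate (the paper leaves even this implicit), and the direct-proof sketch, while only heuristic, correctly identifies the upper bound as the delicate step handled in \cite{lesWW08}.
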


Now we show that $E(b,c,\alpha,\beta)$ is a subset of $F(d,c,\alpha)$ with $b>d>1$.

\begin{lemma}\label{subset}
Let $b,c >1$ and $\alpha,\beta>0$. Then for any $b>d>1$, we have $E(b,c,\alpha,\beta) \subseteq E^\ast(d,c,\alpha)$.
\end{lemma}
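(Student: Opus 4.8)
The plan is to prove the inclusion by contradiction. Fix an arbitrary $x \in E(b,c,\alpha,\beta)$ and suppose, contrary to the claim $x \in E^\ast(d,c,\alpha)$, that $a_n(x) \ge c^{d^{n^\alpha}}$ holds for only finitely many $n$; equivalently, there exists $N = N(x)$ with $a_n(x) < c^{d^{n^\alpha}}$ for every $n \ge N$. The guiding idea is that $T_n(x)$, being the running maximum of the partial quotients, can only be large when some individual partial quotient is large. If every $a_n$ is capped below $c^{d^{n^\alpha}}$, then $T_n$ cannot keep pace with the much faster reference sequence $c^{b^{n^\alpha}}$, and this will contradict the defining limit $\beta > 0$.

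First I would record the monotonicity that makes the bookkeeping work: since $d > 1$ and $t \mapsto t^\alpha$ is increasing on $[1,\infty)$ for $\alpha > 0$, the function $k \mapsto c^{d^{k^\alpha}}$ is increasing in $k$. Consequently, for $n \ge N$,
\[
\max_{N \le k \le n} a_k(x) < \max_{N \le k \le n} c^{d^{k^\alpha}} = c^{d^{n^\alpha}}.
\]
Writing $T_n(x) = \max\{T_{N-1}(x),\ \max_{N \le k \le n} a_k(x)\}$ and noting that $T_{N-1}(x)$ is a fixed constant while $c^{d^{n^\alpha}} \to \infty$, I conclude that $T_n(x) \le c^{d^{n^\alpha}}$ for all sufficiently large $n$.

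The decisive step is then to compare the two growth rates. Dividing by the normalizing sequence gives
\[
\frac{T_n(x)}{c^{b^{n^\alpha}}} \le \frac{c^{d^{n^\alpha}}}{c^{b^{n^\alpha}}} = c^{-(b^{n^\alpha} - d^{n^\alpha})}
\]
for all large $n$. Because $b > d > 1$, we have $b^{n^\alpha} - d^{n^\alpha} = d^{n^\alpha}\bigl((b/d)^{n^\alpha} - 1\bigr) \to \infty$, so the right-hand side tends to $0$. This forces $T_n(x)/c^{b^{n^\alpha}} \to 0$, contradicting $x \in E(b,c,\alpha,\beta)$, for which the limit equals $\beta > 0$. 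Hence $a_n(x) \ge c^{d^{n^\alpha}}$ for infinitely many $n$, that is, $x \in E^\ast(d,c,\alpha)$.

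I do not expect a genuine obstacle here; the only point requiring care is the separation of the exponents. It is the \emph{strict} inequality $b > d$, rather than merely $b \ge d$, that guarantees $b^{n^\alpha} - d^{n^\alpha} \to \infty$ and thereby annihilates the ratio. This is precisely why the lemma is stated with a strict gap $b > d > 1$: the slack between the two doubly exponential rates is exactly what converts the $i.o.$-failure into the vanishing of $T_n(x)/c^{b^{n^\alpha}}$.
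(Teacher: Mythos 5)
Your proof is correct, but it takes a genuinely different route from the paper: you argue by contraposition, while the paper argues directly. The paper fixes $0<\delta<\beta$ with $(\beta-\delta)c>\beta+\delta$, uses \emph{both} sides of the limit, $(\beta-\delta)c^{b^{n^\alpha}}\leq T_n(x)\leq(\beta+\delta)c^{b^{n^\alpha}}$ for large $n$, together with the elementary inequality $a_{n+1}(x)\geq T_{n+1}(x)-T_n(x)$ and the estimate $(\beta-\delta)c^{b^{(n+1)^\alpha}}-(\beta+\delta)c^{b^{n^\alpha}}\geq c^{d^{(n+1)^\alpha}}$, to conclude that $a_{n+1}(x)\geq c^{d^{(n+1)^\alpha}}$ for \emph{every} sufficiently large $n$ --- an ``eventually'' statement strictly stronger than the ``infinitely often'' required for membership in $E^\ast(d,c,\alpha)$. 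You instead assume $a_n(x)<c^{d^{n^\alpha}}$ for all large $n$, use the monotonicity of $k\mapsto c^{d^{k^\alpha}}$ to cap the running maximum by $T_n(x)\leq c^{d^{n^\alpha}}$ eventually, and then observe that
\[
\frac{T_n(x)}{c^{b^{n^\alpha}}}\leq c^{-(b^{n^\alpha}-d^{n^\alpha})}\to 0,
\]
contradicting the limit $\beta>0$. What your route buys: it is more elementary (no $\delta$-bookkeeping, no two-term difference estimate), and it uses strictly less than the hypothesis --- only $\liminf_{n\to\infty}T_n(x)/c^{b^{n^\alpha}}>0$ is needed, so you in fact prove the stronger inclusion $\left\{x\in(0,1):\liminf_{n\to\infty}T_n(x)/c^{b^{n^\alpha}}>0\right\}\subseteq E^\ast(d,c,\alpha)$. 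What the paper's route buys: the stronger conclusion that the partial quotients exceed the threshold for \emph{all} large $n$, not merely infinitely often. Both arguments ultimately hinge on the same quantitative point you correctly isolate: the strict gap $b>d>1$ forces $b^{n^\alpha}-d^{n^\alpha}\to\infty$, so $c^{b^{n^\alpha}}$ outruns $c^{d^{n^\alpha}}$ by an unbounded factor.
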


\begin{proof}
Since $b^{(n+1)^\alpha} - b^{n^\alpha} \to +\infty$ for any $\alpha>0$, we know that $c^{b^{(n+1)^\alpha} - b^{n^\alpha}} \geq c$ for sufficiently large $n$.
Choose $0<\delta <\beta$ enough small such that $(\beta-\delta)\cdot c > \beta +\delta$.

For any $x \in E(b,c,\alpha,\beta)$, we know that
\[
\lim_{n \to \infty} \frac{T_n(x)}{c^{b^{n^\alpha}}}= \beta.
\]
Note that
\begin{align*}
\frac{(\beta-\delta)c^{b^{(n+1)^\alpha}} - (\beta+\delta)c^{b^{n^\alpha}}}{c^{d^{(n+1)^\alpha}}} &= \frac{c^{b^{(n+1)^\alpha}}}{c^{d^{(n+1)^\alpha}}} \cdot \left(\beta-\delta - \left((\beta+\delta)\cdot \frac{b^{n^\alpha}}{b^{(n+1)^\alpha}}\right)\right) \\
& \geq c^{b^{(n+1)^\alpha} - d^{(n+1)^\alpha}} \cdot \left(\beta-\delta - (\beta+\delta)\cdot \frac{1}{c}\right) \to \infty,
\end{align*}
then there exists $N_0>0$ (depending on $\delta$) such that for any $n \geq N_0$, we have
\begin{equation*}
(\beta-\delta)c^{b^{n^\alpha}} \leq T_n(x) \leq (\beta+\delta)c^{b^{n^\alpha}}\ \text{and}\ \
(\beta-\delta)c^{b^{(n+1)^\alpha}} - (\beta+\delta)c^{b^{n^\alpha}} \geq c^{d^{(n+1)^\alpha}}.
\end{equation*}
Thus, we actually deduce that
\[
a_{n+1}(x) \geq T_{n+1}(x) - T_n(x) \geq (\beta-\delta)c^{b^{(n+1)^\alpha}} - (\beta+\delta)c^{b^{n^\alpha}} \geq c^{d^{(n+1)^\alpha}}
\]
holds for any $n \geq N_0$. So we have $x \in F(d,c,\alpha)$.
\end{proof}

It follows from Lemma \ref{WW} that $\dim_{\rm H}E(b,c,\alpha,\beta) \leq 1/2$ for $0<\alpha<1$ and $\dim_{\rm H}E(b,c,\alpha,\beta) =0$ for $\alpha>1$.
When $\alpha =1$, by Lemmas \ref{WW} and \ref{subset}, we know that
\[
\dim_{\rm H}E(b,c,\alpha,\beta) \leq \dim_{\rm H}F(d,c,\alpha) \leq 1/(d+1)
\]
for any $b >d>1$. So $\dim_{\rm H}E(b,c,\alpha,\beta) \leq 1/(b+1)$ by letting $d \to b^+$. Therefore, we eventually obtain that
\[
\dim_{\rm H}E(b,c,\alpha,\beta) \leq
\begin{cases}
1/2,  &\text{$\alpha \in (0,1)$};\\
1/(b+1),  &\text{$\alpha=1$};\\
0,  & \text{$\alpha \in (1,+\infty)$}.
\end{cases}
\]

\subsection{Lower bound}

For any $n \in \mathbb{N}$, we define
\begin{equation*}
f(n) = \left(\beta - \frac{1}{n}\right) c^{b^{n^\alpha}}\ \ \ \ \text{and} \ \ \ \ \
g(n) = \left(\beta + \frac{1}{n}\right) c^{b^{n^\alpha}}.
\end{equation*}
Then $f(n) \to \infty$ and $g(n) \to \infty$ as $n \to \infty$.
So we can choose $N >0$ sufficiently large such that
\[
f(n) \geq 2, \ g(n)\ \text{is non-decreasing and}\ \ \frac{ c^{b^{n^\alpha}}}{n} \geq 2
\]
for all $n \geq N$. Let
\[
F_{N}(b,c,\alpha,\beta) = \big\{x \in (0,1): f(n) \leq a_n(x) \leq g(n)\ \text{for all}\ n \geq N\big\}.
\]

The following lemma states that $F_{N}(b,c,\alpha,\beta)$ is a subset of $E(b,c,\alpha,\beta)$.

\begin{lemma}\label{subset2}
$F_{N}(b,c,\alpha,\beta) \subseteq E(b,c,\alpha,\beta)$.
\end{lemma}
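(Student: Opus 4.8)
The plan is to show that for any $x \in F_N(b,c,\alpha,\beta)$ the maximum $T_n(x)$ is eventually realized by the last partial quotient $a_n(x)$, after which the membership bounds defining $F_N$ force the desired limit directly.

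First I would exploit the doubly exponential growth to prove that the partial quotients of such an $x$ are eventually strictly increasing. The key quantitative fact is that $b^{n^\alpha} - b^{(n-1)^\alpha} \to \infty$ as $n \to \infty$ for every $\alpha > 0$ and $b > 1$, so that the gap factor $c^{b^{n^\alpha} - b^{(n-1)^\alpha}}$ tends to $\infty$. Using the defining inequalities $f(n) \leq a_n(x)$ and $a_{n-1}(x) \leq g(n-1)$, together with $\beta - 1/n \to \beta$ and $\beta + 1/(n-1) \to \beta$, I would estimate
\[
\frac{a_n(x)}{a_{n-1}(x)} \geq \frac{(\beta - 1/n)\, c^{b^{n^\alpha}}}{(\beta + 1/(n-1))\, c^{b^{(n-1)^\alpha}}} \longrightarrow \infty,
\]
and conclude that there is some $M \geq N$ with $a_n(x) > a_{n-1}(x)$ for all $n \geq M$.

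Next I would translate eventual monotonicity into the identity $T_n(x) = a_n(x)$ for large $n$. Once the sequence $(a_n(x))_{n \geq M}$ is strictly increasing, the maximum of $a_M(x), \dots, a_n(x)$ is $a_n(x)$, so $T_n(x) = \max\{T_{M-1}(x), a_n(x)\}$ for $n \geq M$. Since $T_{M-1}(x)$ is a fixed constant while $a_n(x) \geq f(n) \to \infty$, there is some $M' \geq M$ with $T_n(x) = a_n(x)$ for all $n \geq M'$.

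Finally, I would read off the limit. For $n \geq M'$ the membership condition gives
\[
\beta - \frac{1}{n} \leq \frac{T_n(x)}{c^{b^{n^\alpha}}} = \frac{a_n(x)}{c^{b^{n^\alpha}}} \leq \beta + \frac{1}{n},
\]
so letting $n \to \infty$ yields $\lim_{n\to\infty} T_n(x)/c^{b^{n^\alpha}} = \beta$, i.e. $x \in E(b,c,\alpha,\beta)$. The only step requiring any care is the first one, namely establishing eventual monotonicity, but this is essentially immediate from the doubly exponential gap; the rest is bookkeeping. I expect no serious obstacle, since the growth of $c^{b^{n^\alpha}}$ dwarfs both the lower-order corrections $\pm 1/n$ and any finite initial segment of partial quotients.
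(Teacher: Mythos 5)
Your proof is correct, but it takes a genuinely different route from the paper's. The paper never establishes that the partial quotients are eventually increasing, nor that $T_n(x)=a_n(x)$ for large $n$; instead it uses the monotonicity of the upper envelope, built into the choice of $N$ (namely that $g(n)=(\beta+1/n)c^{b^{n^\alpha}}$ is non-decreasing for $n\geq N$), to get the one-line bound $T_n(x)\leq\max\{T_{N-1}(x),g(n)\}\leq T_{N-1}(x)+g(n)$, and then sandwiches: $f(n)\leq a_n(x)\leq T_n(x)\leq T_{N-1}(x)+g(n)$, whence $T_n(x)/c^{b^{n^\alpha}}\to\beta$ because $T_{N-1}(x)$ is a fixed constant. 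Your argument instead invokes the doubly exponential gap $c^{b^{n^\alpha}-b^{(n-1)^\alpha}}\to\infty$ (the same fact the paper uses in Lemma \ref{subset} for the upper-bound direction) to show $a_n(x)/a_{n-1}(x)\geq f(n)/g(n-1)\to\infty$, hence eventual strict monotonicity of the partial quotients and $T_n(x)=a_n(x)$ for all large $n$. What each approach buys: yours yields the sharper structural fact that the maximum is eventually attained at the last partial quotient, but it genuinely needs the rapid growth of $c^{b^{n^\alpha}}$ --- for slowly growing $\phi$ (say polynomial) the ratio $f(n)/g(n-1)$ does not tend to infinity and eventual monotonicity cannot be extracted, even though the analogous inclusion still holds; the paper's envelope argument uses only that $g$ is eventually non-decreasing and that $f(n)$ and $g(n)$ are both asymptotic to $\beta c^{b^{n^\alpha}}$, so it transfers verbatim to essentially any normalizing sequence, which is why the remark after the proof can state a lower bound for general $\phi$.
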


\begin{proof}
For any $x \in F_{N}(b,c,\alpha,\beta)$, we have
\begin{equation}\label{fg}
f(n) \leq a_n(x) \leq g(n)
\end{equation}
for all $n \geq N$. Note that $g(n)$ is non-decreasing for all $n \geq N$, we know that
\[
T_n(x) \leq \max\big\{T_{N-1}(x), g(n)\big\} \leq T_{N-1}(x) +g(n).
\]
Combing this with (\ref{fg}), we deduce that
\[
f(n) \leq a_n(x)\leq T_n(x) \leq T_{N-1}(x) +g(n)
\]
for any $n \geq N$. Therefore,
\[
\lim_{n \to \infty} \frac{T_n(x)}{c^{b^{n^\alpha}}}= \beta
\]
and hence $x \in E(b,c,\alpha,\beta)$. That is to say, $F_{N}(b,c,\alpha,\beta) \subseteq E(b,c,\alpha,\beta)$.
\end{proof}

Next we estimate the lower bound for the Hausdorff dimenson of $F_{N}(b,c,\alpha,\beta)$. To do this, we need the following lemma, which provides a method to obtain a lower bound Hausdorff dimension of a fractal set (see \cite[Example 4.6]{lesFal90}).

%\begin{lemma}[Billingsley's lemma]\label{Bil}
%Let $E \subseteq [0,1]$ be a Borel set and $\mu$ is a measure with $\mu(E) >0$. If the inequality
%\[
%\liminf_{r \to 0} \frac{\log \mu(B(x,r))}{\log r} \geq s
%\]
%holds for any $x \in E$, where $B(x,r)$ denotes the open ball with centre at $x$ and radius $r$. Then $\dim_\mathrm{H} E \geq s$.
%\end{lemma}

\begin{lemma}\label{4.6}
Let $E = \bigcap_{n \geq 0} E_n$, where $[0,1] = E_0 \supset E_1 \supset \cdots$ is a decreasing sequence of subsets in $[0,1]$ and $E_n$ is a union of a finite number of disjoint closed intervals (called $n$-th level intervals) such that each interval in $E_{n-1}$ contains at least $m_n$ intervals of $E_n$ which are separated by gaps of lengths at least $\varepsilon_n$. If $m_n \geq 2$ and $\varepsilon_{n-1}> \varepsilon_n >0$, then
\[
\dim_\mathrm{H} E \geq \liminf_{n \to \infty} \frac{\log(m_1m_2 \cdots m_{n-1})}{-\log (m_{n}\varepsilon_n)}.
\]
\end{lemma}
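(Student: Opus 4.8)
The plan is to prove this via the mass distribution principle, building a measure supported on $E$ whose local scaling is governed by the parameters $m_k$ and $\varepsilon_k$. First I would construct a mass distribution $\mu$ on $E$ as follows: inside each interval of $E_{k-1}$ select exactly $m_k$ of the intervals of $E_k$ that it contains, and split the mass of the parent equally among these children, so that every $k$-th level interval carries mass $(m_1 m_2 \cdots m_k)^{-1}$. Since the masses of the chosen children sum to the mass of their parent, the usual consistency check shows that this extends to a Borel probability measure $\mu$ with $\mathrm{supp}\,\mu \subseteq E$.

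Next I would invoke the mass distribution principle (see \cite{lesFal90}): if there are constants $C>0$, $\delta>0$ and an exponent $s$ with $\mu(U) \leq C|U|^s$ for every set $U$ with $|U| \leq \delta$, then $\mathcal{H}^s(E) \geq \mu(E)/C > 0$ and hence $\dim_{\rm H} E \geq s$. This reduces everything to the local upper estimate $\mu(U) \leq C|U|^s$. Fix a small set $U$ and let $k$ be the unique index with $\varepsilon_k \leq |U| < \varepsilon_{k-1}$, which exists because $\{\varepsilon_n\}$ is strictly decreasing. The geometric heart of the argument is a counting estimate for the number of $k$-th level intervals meeting $U$: since consecutive intervals of $E_{k-1}$ are separated by gaps of length at least $\varepsilon_{k-1} > |U|$, the set $U$ can meet only one interval of $E_{k-1}$, so every $k$-th level interval it meets lies inside a single parent; as those are separated by gaps at least $\varepsilon_k$, their number is at most $\min\{m_k,\ 1 + |U|/\varepsilon_k\}$.

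From this I would deduce $\mu(U) \leq (m_1\cdots m_k)^{-1}\min\{m_k,\ 2|U|/\varepsilon_k\}$, using $|U| \geq \varepsilon_k$ to absorb the $+1$. The final step turns this into a clean power of $|U|$. Writing $s_k = \log(m_1\cdots m_{k-1})/(-\log(m_k\varepsilon_k))$, so that $(m_k\varepsilon_k)^{s_k} = (m_1\cdots m_{k-1})^{-1}$, I would split into the two cases $|U| \geq m_k\varepsilon_k/2$ and $|U| < m_k\varepsilon_k/2$ according to which term realises the minimum; in each case a short computation using $m_k\varepsilon_k < 1$ (valid for all large $k$) and the monotonicity of $t \mapsto (m_k\varepsilon_k)^t$ yields $\mu(U) \leq 2|U|^s$ for every $s \leq s_k$. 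Given $s < \liminf_k s_k$, this inequality then holds for all sufficiently small $U$ (those whose associated index $k$ is large enough that $s_k > s$), so the mass distribution principle gives $\dim_{\rm H} E \geq s$; letting $s \uparrow \liminf_k s_k$ completes the proof.

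I expect the main obstacle to be the interpolation in the last paragraph: converting the two competing bounds $m_k$ and $|U|/\varepsilon_k$ into a single estimate of the form $C|U|^s$ that is uniformly valid across all small scales. This is precisely where the combination $\log(m_1\cdots m_{k-1})/(-\log(m_k\varepsilon_k))$ is forced to appear, and where one must check that the exponent $s$ can be chosen independently of $U$ — the monotonicity hypotheses $m_k \geq 2$ and $\varepsilon_{k-1} > \varepsilon_k > 0$ are exactly what make both the one-parent counting bound and this interpolation go through.
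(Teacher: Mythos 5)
Your proposal is correct and follows essentially the same route as the paper: the same equal-splitting mass distribution on a pruned subtree with exactly $m_k$ children per parent, the same one-parent counting bound $\mu(U) \leq (m_1\cdots m_k)^{-1}\min\{m_k,\, 2|U|/\varepsilon_k\}$ at the scale $\varepsilon_k \leq |U| < \varepsilon_{k-1}$, and the mass distribution principle. The only cosmetic difference is the final algebraic step — the paper converts the minimum into a power of $|U|$ via the interpolation $\min\{a,b\} \leq a^{1-s}b^{s}$ for $0 \leq s \leq 1$, whereas you run an equivalent two-case analysis; note also that your claim ``$m_k\varepsilon_k < 1$ for all large $k$'' is justified exactly by the paper's opening reduction to the case where the liminf is positive (otherwise the statement is vacuous).
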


\begin{proof}
Suppose the liminf is positive, otherwise the result is obvious. We may assume that each interval in $E_{n-1}$ contains exactly $m_n$ intervals of $E_n$ since we can remove some excess intervals to get smaller sets $E^\prime_n$ and $E^\prime$. Thus the gaps of lengths between different intervals in $E_n$ are not changed and hence we just need to deal with these new smaller sets. Now we define a mass distribution $\mu$ on $E$ by assigning a mass of $(m_1\cdots m_n)^{-1}$ to each of $(m_1\cdots m_n)$  $n$-th level intervals in $E_n$. Next we will check the conditions of the classical Mass distribution principle.

Let $U$ be an interval of length $|U|$ satisfying $0<|U|<\varepsilon_1$. Then there exists a integer $n$ such that $\varepsilon_n\leq |U|<\varepsilon_{n-1}$.
On the one hand, $U$ can intersect at most one $(n-1)$-th level interval since the gap of length between different intervals in $E_{n-1}$ is at least $\varepsilon_{n-1}$ and hence $U$ can intersect at most $m_n$ $n$-th level intervals; on the other hand, since $|U| \geq \varepsilon_n$ and the gap of length between different intervals in $E_{n}$ is at least $\varepsilon_{n}$, we know $U$ can intersect at most $(|U|/\varepsilon_n+1) \leq 2|U|/\varepsilon_n$ $n$-th level intervals. Note that each $n$-th level interval has mass $(m_1\cdots m_n)^{-1}$, so
\begin{align*}
\mu(U) &\leq \min\{m_n, 2|U|/\varepsilon_n\} \cdot (m_1\cdots m_n)^{-1}
\leq m^{1-s}_n\cdot (2|U|/\varepsilon_n)^s \cdot (m_1\cdots m_n)^{-1}
\end{align*}
for any $0 \leq s \leq 1$ and hence
\[
\frac{\mu(U)}{|U|^s} \leq \frac{2^s}{(m_1\cdots m_{n-1})m^s_n\varepsilon^s_n}.
\]
Let $s >\liminf\limits_{n \to \infty} \log(m_1 \cdots m_{n-1})/(-\log (m_{n}\varepsilon_{n}))$. Then $(m_1\cdots m_{n-1})m^s_n\varepsilon^s_n >1$ for sufficiently large $n$. Thus $\mu(U) \leq C\cdot |U|^s$, where $C>0$ is an absolute constant.
By the classical Mass distribution principle (see \cite[Chapter 4]{lesFal90}), we have $\dim_{\rm H} E \geq s$. This completes the proof.
\end{proof}

\begin{lemma}\label{FN}
\[
\dim_{\rm H}F_{N}(b,c,\alpha,\beta) \geq
\begin{cases}
1/2,  &\text{$\alpha \in (0,1)$};\\
1/(b+1),  &\text{$\alpha=1$};\\
0,  & \text{$\alpha \in (1,+\infty)$}.
\end{cases}
\]
\end{lemma}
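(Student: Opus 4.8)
The plan is to build a Cantor-like subset of $F_{N}(b,c,\alpha,\beta)$ and feed it into Lemma \ref{4.6}. Since passing to a subset only decreases the Hausdorff dimension, I first dispose of the free initial digits by fixing $a_1=\cdots=a_{N-1}=1$, so that the whole construction sits inside the single cylinder $I(1,\dots,1)$. For each $n\ge N$ the admissible values of the $n$-th digit are the integers in $[f(n),g(n)]$; because two integer cylinders $I(\dots,a_n)$ and $I(\dots,a_n+1)$ share an endpoint (they have no gap), I retain only every other admissible integer, discarding the ones in between to manufacture genuine gaps. Writing $D_n$ for the retained set, the $n$-th level $E_n$ is the union of the closed cylinders $I(1,\dots,1,a_N,\dots,a_n)$ with $a_k\in D_k$ for $N\le k\le n$; then each level-$(n-1)$ cylinder contains exactly $m_n:=\#D_n$ level-$n$ cylinders, and consecutive retained cylinders are separated by at least one discarded cylinder.

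Two estimates are needed. First, since $g(n)-f(n)=\tfrac{2}{n}\,c^{b^{n^\alpha}}$, retaining every other integer gives $m_n$ comparable to $\tfrac1n c^{b^{n^\alpha}}$, hence $\log m_n=b^{n^\alpha}\log c+O(\log n)$. Second, by Proposition \ref{CY} and (\ref{cylinder inequality}) the gap between two consecutive retained cylinders is at least the length $1/(q_n(q_n+q_{n-1}))$ of the discarded cylinder lying between them, where $q_n=a_nq_{n-1}+q_{n-2}$ is comparable to $a_n q_{n-1}$. The worst (smallest) gap corresponds to the largest digit $a_n\approx g(n)$ and the largest admissible denominator, which I denote $Q_{n-1}$; thus I may take $\varepsilon_n$ comparable to $1/\!\left(g(n)^2 Q_{n-1}^2\right)$. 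By (\ref{DX}) one has $Q_{n-1}\le\prod_{k<n}(g(k)+1)$, so $\log Q_{n-1}=\log c\sum_{k<n}b^{k^\alpha}+O(n)$. Because the denominators grow, these $\varepsilon_n$ are decreasing, and $m_n\ge 2$, so the hypotheses of Lemma \ref{4.6} hold once $N$ is large.

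Set $s_n:=\sum_{k=1}^{n}b^{k^\alpha}$. Combining the two estimates, one power of $g(n)\approx\beta c^{b^{n^\alpha}}$ cancels against $m_n$, leaving $-\log(m_n\varepsilon_n)=\bigl(b^{n^\alpha}+2s_{n-1}\bigr)\log c\,(1+o(1))$, whereas $\log(m_1\cdots m_{n-1})=s_{n-1}\log c\,(1+o(1))$. Lemma \ref{4.6} then gives
\[
\dim_{\rm H}F_{N}(b,c,\alpha,\beta)\ \ge\ \liminf_{n\to\infty}\frac{s_{n-1}}{\,b^{n^\alpha}+2s_{n-1}\,}.
\]
It remains to evaluate this limit, which is the arithmetic heart of the three regimes. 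Comparing $s_n$ with its last term yields $s_n\approx C\,n^{1-\alpha}b^{n^\alpha}$, so $s_{n-1}/b^{n^\alpha}\to\infty$ for $\alpha\in(0,1)$ and the ratio tends to $1/2$; for $\alpha=1$ the sum is geometric, $s_{n-1}\approx b^{n}/(b-1)$, and the ratio tends to $1/(b+1)$; for $\alpha>1$ the last term dominates, $s_{n-1}/b^{n^\alpha}\to0$, and the trivial bound $0$ results. I expect the gap estimate to be the main obstacle: one must pin down the exact power of the denominator, namely that the gap scales like $1/Q_{n-1}^2$, since it is this squaring that produces the coefficient $2$ in front of $s_{n-1}$, and that coefficient is precisely what fixes the critical value at $1/(b+1)$ (rather than $1/b$) when $\alpha=1$.
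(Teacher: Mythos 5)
Your proposal is correct and is essentially the paper's own argument: a Cantor-like subset of $F_{N}(b,c,\alpha,\beta)$ with digits confined to $[f(n),g(n)]$, fed into Lemma \ref{4.6} with $\log m_n = b^{n^\alpha}\log c + O(\log n)$ and $-\log(m_n\varepsilon_n) = \bigl(b^{n^\alpha}+2s_{n-1}\bigr)\log c\,(1+o(1))$, followed by the same three-case evaluation of $\liminf s_{n-1}/(b^{n^\alpha}+2s_{n-1})$. The only differences are cosmetic. First, you manufacture gaps by discarding every other admissible digit, whereas the paper keeps all digits in $[f(n),g(n)]$ and instead defines the $n$-th level intervals as unions of the admissible $(n+1)$-th order cylinders, so that the excluded digit-$1$ cylinder $I(\tau_1,\dots,\tau_n,1)$ itself serves as the separator; both devices yield gaps of the same order $\bigl(\prod_{k\le n}(\text{digit bound})\bigr)^{-2}$, and halving $m_n$ is invisible in the asymptotics. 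Second, you fix $a_1=\cdots=a_{N-1}=1$, which makes $m_k=1$ for $k<N$ and so literally violates the hypothesis $m_n\ge 2$ of Lemma \ref{4.6}; this is harmless — relabel so the construction starts at level $N$ with $E_0=\mathrm{cl}\,I(1,\dots,1)$ (the mass-distribution proof never uses $E_0=[0,1]$, and dropping finitely many levels shifts the numerator $\log(m_1\cdots m_{n-1})$ by a constant) — but note the paper avoids the issue altogether by letting the first $N-1$ digits range over $\{1,\dots,\lfloor c^{b^{k^\alpha}}/k\rfloor+1\}$, which is exactly why its $\mathcal{D}_n$ looks the way it does.
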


\begin{proof}
For any $n \geq 1$, we define
\begin{align*}
\mathcal{D}_n = \Big\{(\sigma_1,\cdots,\sigma_n) \in\mathbb{N}^n:~1 \leq \sigma_k \leq &c^{b^{k^\alpha}}/k +1\ \text{for all}\ 1 \leq k <N\\
&  \text{and}\ f(k) \leq \sigma_k \leq g(k)\ \text{for all}\ N \leq k \leq n\Big\}
\end{align*}
and
\[
\mathcal{D} = \bigcup_{n \geq 0} \mathcal{D}_n
\]
with the convention $\mathcal{D}_0 := \emptyset$. For any $n \geq 1$ and $(\sigma_1,\cdots,\sigma_n) \in \mathcal{D}_n$, we denote
\[
J(\sigma_1,\cdots,\sigma_n) = \bigcup_{\sigma_{n+1}} cl I(\sigma_1,\cdots,\sigma_n, \sigma_{n+1})
\]
and call it the $n$-th level interval, where the union is taken over all $\sigma_{n+1}$ such that $(\sigma_1,\cdots,\sigma_n, \sigma_{n+1}) \in \mathcal{D}_{n+1}$, cl denotes the closure of a set and $I(\sigma_1,\cdots,\sigma_n, \sigma_{n+1})$ is the $(n+1)$-th cylinder for continued fractions.

Let
\[
E_0 = [0,1],\ \ \ \ E_n = \bigcup_{(\sigma_1,\cdots,\sigma_n)\in \mathcal{D}_n} J(\sigma_1,\cdots,\sigma_n)
\]
for any $n \geq 1$ and $E:= \bigcap_{n \geq 0} E_n$. Then $E$ is a Cantor-like subset of $F_{N}(b,c,\alpha,\beta)$. It follows from the construction of $E$ that each element in $E_{n-1}$ contains some number of the $n$-th level intervals in $E_n$. We denote such a number by $M_n$. If $1 \leq n < N$, by the definition of $\mathcal{D}_n$, we have
\[
M_n = \left\lfloor\frac{c^{b^{n^\alpha}}}{n}\right\rfloor +1,
\]
where $\lfloor x\rfloor$ denotes the greatest integer not exceeding $x$.
When $n \geq N$, we get
\[
M_n \geq \left\lfloor\frac{2c^{b^{n^\alpha}}}{n} \right\rfloor \geq \left\lfloor\frac{c^{b^{n^\alpha}}}{n}\right\rfloor +1.
\]
So we obtain $M_n \geq m_n :=\left\lfloor c^{b^{n^\alpha}}/n\right\rfloor +1$.

Next we estimate the gaps between the same order level intervals. For any $n \geq N$ and two distinct level intervals $J(\tau_1,\cdots,\tau_n)$ and $J(\sigma_1,\cdots,\sigma_n)$ of $E_n$, we assume that $J(\tau_1,\cdots,\tau_n)$ locates in the left of $J(\sigma_1,\cdots,\sigma_n)$ without loss of generality. By Proposition \ref{CY}, we know the level intervals $J(\tau_1,\cdots,\tau_n)$ and $J(\sigma_1,\cdots,\sigma_n)$ are separated by the $(n+1)$-th cylinder $I(\tau_1,\cdots,\tau_n,1)$ or $I(\sigma_1,\cdots,\sigma_n,1)$ according to $n$ is even or odd. In fact, if $n$ is odd,
note that $J(\tau_1,\cdots,\tau_n)$ is a union of a finite number of the closure of $(n+1)$-th order cylinders like $I(\tau_1,\cdots,\tau_n, j)$ with $2 \leq f(n+1) \leq j \leq g(n+1)$ and these cylinders run from right to left, and so is $J(\sigma_1,\cdots,\sigma_n)$, therefore $J(\tau_1,\cdots,\tau_n)$ and $J(\sigma_1,\cdots,\sigma_n)$ are separated by $I(\tau_1,\cdots,\tau_n,1)$ in this case. When $n$ is even, they are separated by $I(\sigma_1,\cdots,\sigma_n,1)$.
Thus the gap is at least
\[
|I(\tau_1,\cdots,\tau_n,1)| \ \ \text{or}\ \ |I(\sigma_1,\cdots,\sigma_n,1)|,
\]
where $|\cdot|$ denotes the length of a interval. In view of (\ref{DX}) and (\ref{cylinder inequality}), we deduce that
\begin{align*}
|I(\tau_1,\cdots,\tau_n,1)| \geq \frac{1}{2q^2_{n+1}} &\geq \frac{1}{8} \cdot \left(\prod_{k=1}^n (\tau_k +1)\right)^{-2}\\
& = \frac{1}{2^{2n+3}} \cdot \left(\prod_{k=1}^{N-1} \left( \frac{c^{b^{k^\alpha}}}{k} +1\right) \prod_{k=N}^n \left(\beta +\frac{1}{k}\right) c^{b^{k^\alpha}}\right)^{-2}\\
&:= \varepsilon_n.
\end{align*}
Similarly, we can also obtain $|I(\sigma_1,\cdots,\sigma_n,1)| \geq \varepsilon_n$.
It is easy to check that $\varepsilon_n > \varepsilon_{n+1} >0$ for sufficiently large $n$ and $\varepsilon_n \to 0$ as $n \to \infty$.
These imply that the gaps between any two $n$-th level intervals are at least $\varepsilon_n$.
By Lemma \ref{4.6}, we have
\begin{align*}
\dim_\mathrm{H} E &\geq \liminf_{n \to \infty} \frac{\log(m_1m_2 \cdots m_n)}{-\log (m_{n+1}\varepsilon_{n+1})} \\
& \geq \liminf_{n \to \infty} \frac{b^{1^\alpha}+b^{2^\alpha}+\cdots+b^{n^\alpha}}{ 2(b^{1^\alpha}+\cdots+b^{(n+1)^\alpha}) -  b^{(n+1)^\alpha}}\\
& = \liminf_{n \to \infty} \frac{b^{1^\alpha}+\cdots+b^{n^\alpha}}{ 2(b^{1^\alpha}+\cdots+b^{n^\alpha}) +  b^{(n+1)^\alpha}}.
\end{align*}
When $0 < \alpha <1$, then $b^{(n+1)^\alpha}/(b^{1^\alpha}+\cdots+b^{n^\alpha}) \to 0$ as $n \to \infty$ and hence $\dim_\mathrm{H} E \geq 1/2$. If $\alpha =1$, we know $b^{n+1}/(b+\cdots+b^n)\to b-1$ as $n \to \infty$ and hence $\dim_\mathrm{H} E  \geq 1/(b+1)$. When $\alpha >1$, we have $b^{(n+1)^\alpha}/(b^{1^\alpha}+\cdots+b^{n^\alpha}) \to +\infty$ as $n \to \infty$ and hence $\dim_\mathrm{H} E  \geq 0$.
Since $E$ is a subset of $F_{N}(b,c,\alpha,\beta)$, we complete the proof.

%In the following, we will define a mass distribution $\nu$ on $E$. We can assume that each $(n-1)$-th level interval in $E_{n-1}$ contains
%exactly $m_n:= \left\lfloor c^{b^{n^\alpha}}/n\right\rfloor +1$ $n$-th level intervals in $E_n$, otherwise we can deduct the excess level intervals. Thus we can still get a subset $E^\prime$ of $F_{N_2}(b,c,\alpha,\beta)$ and the following steps are also valid for this new subset. For any $n \geq 1$ and $(\sigma_1,\cdots,\sigma_n) \in \mathcal{D}_n$, we define
%\[
%\nu(J(\sigma_1,\cdots,\sigma_n)) = \frac{1}{m_1m_2\cdots m_n}.
%\]
%Thus the measure $\nu$ is defined well on all level intervals since we can verify that
%\[
%\sum_{(\sigma_1,\cdots,\sigma_n) \in \mathcal{D}_n} \nu(J(\sigma_1,\cdots,\sigma_n)) =1
%\]
%and
%\[
%\sum_{\sigma_{n+1}} \nu(J(\sigma_1,\cdots,\sigma_n, \sigma_{n+1})) = \nu(J(\sigma_1,\cdots,\sigma_n)),
%\]
%where the sum is taken over all $\sigma_{n+1}$ such that $(\sigma_1,\cdots,\sigma_n, \sigma_{n+1}) \in \mathcal{D}_{n+1}$. Notice that the set of all cylinders forms a semi-algebra, by Kolmogorov's extension theorem, the measure $\nu$ is defined on the measurable space $((0,1), \mathcal{B})$, where $\mathcal{B}$ is the Borel $\sigma$-algebra on $(0,1)$.  Now we will check the conditions in Billingsley¡¯s lemma.
\end{proof}

Combing Lemmas \ref{subset2} and \ref{FN}, we get
\[
\dim_{\rm H} E(b,c,\alpha,\beta) \geq  \dim_{\rm H} F_{N}(b,c,\alpha,\beta) \geq
\begin{cases}
1/2,  &\text{$\alpha \in (0,1)$};\\
1/(b+1),  &\text{$\alpha=1$};\\
0,  & \text{$\alpha \in (1,+\infty)$}.
\end{cases}
\]
The results on the upper and lower bounds imply the proof of Theorem \ref{doubly exponential}.

\begin{remark}
In fact, from the proof of the lower bound, we can obtain that
\[
\dim_{\rm H} E_\phi \geq \liminf_{n \to \infty} \frac{\log\phi(1)+\cdots+\log\phi(n)}{ 2(\log\phi(1)+\cdots+\log\phi(n)) +  \log\phi(n+1)}.
\]
Therefore, we always have $\dim_{\rm H} E_\phi \geq 1/2$ when $\phi(n)$ tends to infinity with single exponential rates since 
\[
\limsup_{n \to \infty}\frac{(n+1)^\alpha}{1^\alpha + 2^\alpha +\cdots+n^\alpha} = 0
\] 
for any $\alpha >0$. And we also get $\dim_{\rm H} E(b,c,\alpha,\beta) \geq 1/2$ when $0 < \alpha < 1$ and $\dim_{\rm H} E(b,c,\alpha,\beta) \geq 1/(b+1)$ if $\alpha =1$ since 
\[
\limsup_{n \to \infty}\frac{b^{(n+1)^\alpha}}{b^{1^\alpha} + b^{2^\alpha} +\cdots+ b^{n^\alpha}} = 0
\] 
for $0 < \alpha < 1$ and it is $(b-1)$ when $\alpha =1$. Moreover, it also indicates that the Hausdorff dimension of $E_\phi$ is just related to the second base (i.e., $b$) in the doubly exponential rate when $\alpha =1$.
\end{remark}
%{\bf Acknowledgement:}

\end{document}